\newtheorem{theorem}{Theorem}[section]
\newtheorem{lemma}[theorem]{Lemma}
\theoremstyle{definition}
\theoremstyle{remark}
\newtheorem{remark}[theorem]{Remark}
\numberwithin{equation}{section}
\begin{document}

\newcommand{\tmname}[1]{\textsc{#1}}
\newcommand{\tmop}[1]{\operatorname{#1}}
\newcommand{\tmsamp}[1]{\textsf{#1}}
\newenvironment{enumerateroman}{\begin{enumerate}[i.]}{\end{enumerate}}
\newenvironment{enumerateromancap}{\begin{enumerate}[I.]}{\end{enumerate}}

\newcounter{problemnr}
\setcounter{problemnr}{0}
\newenvironment{problem}{\medskip
  \refstepcounter{problemnr}\small{\bf\noindent Problem~\arabic{problemnr}\ }}{\normalsize}
\newenvironment{enumeratealphacap}{\begin{enumerate}[A.]}{\end{enumerate}}
\newcommand{\tmmathbf}[1]{\boldsymbol{#1}}

\def\paral{/\kern-0.55ex/}
\def\parals_#1{/\kern-0.55ex/_{\!#1}}
\def\bparals_#1{\breve{/\kern-0.55ex/_{\!#1}}}
\def\n#1{|\kern-0.24em|\kern-0.24em|#1|\kern-0.24em|\kern-0.24em|}

\newcommand{\A}{{\bf \mathcal A}}
\newcommand{\B}{{\bf \mathcal B}}
\def\C{\mathbb C}
\def\D{\mathcal D}
\newcommand{\dom}{{\mathcal D}om}
\newcommand{\pathR}{{\mathcal{\rm I\!R}}}
\newcommand{\Nabla}{{\bf \nabla}}
\newcommand{\E}{{\mathbf E}}
\newcommand{\Epsilon}{{\mathcal E}}
\newcommand{\F}{{\mathcal F}}
\newcommand{\G}{{\mathcal G}}
\def\g{{\mathfrak g}}
\newcommand{\HH}{{\mathcal H}}
\def\h{{\mathfrak h}}
\def\k{{\mathfrak k}}
\newcommand{\I}{{\mathcal I}}
\def\LL{{\mathbb L}}
\def\law{\mathop{\mathrm{ Law}}}
\def\m{{\mathfrak m}}
\newcommand{\K}{{\mathcal K}}
\newcommand{\p}{{\mathfrak p}}
\newcommand{\R}{{\mathbb R}}
\newcommand{\Rc}{{\mathcal R}}
\def\T{{\mathcal T}}
\def\M{{\mathcal M}}
\def\N{{\mathcal N}}
\newcommand{\pnabla}{{\nabla\!\!\!\!\!\!\nabla}}
\def\X{{\mathbb X}}
\def\Y{{\mathbb Y}}
\def\L{{\mathcal L}}
\def\1{{\mathbf 1}}
\def\half{{ \frac{1}{2} }}
\def\vol{{\mathop {\rm vol}}}
\def\euc{{\mathop {\rm eul}}}

\def\ad{{\mathop {\rm ad}}}
\def\Conj{{\mathop {\rm Ad}}}
\def\Ad{{\mathop {\rm Ad}}}
\newcommand{\const}{\rm {const.}}
\newcommand{\eg}{\textit{e.g. }}
\newcommand{\as}{\textit{a.s. }}
\newcommand{\ie}{\textit{i.e. }}
\def\s.t.{\mathop {\rm s.t.}}
\def\esssup{\mathop{\rm ess\; sup}}
\def\Ric{\mathop{\rm Ric}}
\def\div{\mathop{\rm div}}
\def\ker{\mathop{\rm ker}}
\def\Hess{\mathop{\rm Hess}}
\def\Image{\mathop{\rm Image}}
\def\Dom{\mathop{\rm Dom}}
\def\id{\mathop {\rm Id}}
\def\Image{\mathop{\rm Image}}
\def\Cyl{\mathop {\rm Cyl}}
\def\Conj{\mathop {\rm Conj}}
\def\Span{\mathop {\rm Span}}
\def\trace{\mathop{\rm trace}}
\def\ev{\mathop {\rm ev}}
\def\supp{{\mathrm supp}}
\def\Ent{\mathop {\rm Ent}}
\def\tr{\mathop {\rm tr}}
\def\graph{\mathop {\rm graph}}
\def\loc{\mathop{\rm loc}}
\def\so{{\mathfrak {so}}}
\def\su{{\mathfrak {su}}}
\def\u{{\mathfrak {u}}}
\def\o{{\mathfrak {o}}}
\def\pp{{\mathfrak p}}
\def\gl{{\mathfrak gl}}
\def\hol{{\mathfrak hol}}
\def\z{{\mathfrak z}}
\def\t{{\mathfrak t}}
\def\<{\langle}
\def\>{\rangle}
\def\span{{\mathop{\rm span}}}
\def\diam{\mathrm {diam}}
\def\inj{\mathrm {inj}}
\def\Lip{\mathrm {Lip}}
\def\Iso{\mathrm {Iso}}
\def\Osc{\mathop{\rm Osc}}
\renewcommand{\thefootnote}{}
\def\supp{\mathrm {Supp}}
\title{ On Hypoelliptic Bridge}

\author{Xue-Mei Li}
\maketitle

\begin{abstract}
We prove that if the Markov generator of a diffusion process satisfies the two step strong H\"ormander condition, the conditioned hypoelliptic bridge satisfies an integral bound and is a continuous semi-martingale.  \end{abstract}

\footnote{AMS Mathematics Subject Classification : 60Gxx, 60Hxx, 58J65, 58J70. 
Part of the work is done during a visit to MSRI, 2015. {Address:  Mathematics Institute, The University of Warwick, Coventry CV4 7AL, U.K.}}

\section{Introduction}
We are motivated by the path integration formula  and also by  the $L^2$ analysis on the space of pinned continuous curves where
the Brownian bridge  plays an important role. 
Let $M$ be a smooth connected Riemannian manifold. Denote $C([0,1];M)$ the space of 
continuous functions from $[0,1]$ to $M$ and $C_{x_0, z_0}([0,1];M)$ its subspace of curves that begin at $x_0$ and end at $z_0$.   
 If $(x_t)$ is a Brownian motion with initial value $x_0$ and without explosion, a Brownian bridge $(b_t^{x_0,z_0}, 0\le t \le 1)$ begins with $x_0$ and ends at $z_0$ is a stochastic process with probability distribution $P(\cdot| x_1=z_0)$. The Brownian bridge is well known to induce a probability measure on $C_{x_0, z_0}([0,1];M)$ for $M$ compact.

For the $L^2$ analysis, it is standard to equip the space with the probability measure determined by the Brownian bridge, which fuelled the study of the logarithm of the heat kernel and their derivatives.
However there is no particular strong argument for the use of Brownian bridges, and indeed
one is tempted to explore. For example on a Lie group, a basic object is
a diffusion operator built from a family of left invariant vector fields. 

If $\{X_i, i= 0, 1, \dots, m\}$ is a family of smooth vector fields,
let $\L={1\over 2}\sum_{k=1}^m L_{X_k}L_{X_k}+L_{X_0}$  where $L_v$ denotes Lie differentiation in the direction of a vector $v$. 
If the diffusion coefficients $\{X_1, \dots, X_m\}$ and their iterated Lie brackets span the tangent space $T_xM$ at each $x$, $\L$ is said to satisfy {\it the strong H\"ormander condition}. Denote $D_k$ the set of vector fields and their commutators up to level $k$. If $\L$ satisfies the strong H\"ormander condition the minimal $k$ needed to span $T_xM$ is denoted by $l(x)$. 
If for all $x$, $l(x)\le p$, $\L$ is said to satisfy the {\it $p$-step strong H\"ormander condition}. 
We assume that there exists a global parabolic integral kernel for $\L$, which holds if
  $\L$ is a sub-Laplacian and the sub-Riemannian distance is complete, L. Strichartz \cite{Strichartz-sub-riemannian-geom};
or is symmetric and $M$ is compact, D. Jerison and A. Sanchez-Calle \cite{Jerison-Sanchez-Calle} and B. Davies \cite{Davies-sum-squares-88}; or is uniformly hypoelliptic and $M=\R^n$, S. Kusuoka and D. Stroock \cite{Kusuoka-Stroock-I}. See aso  L. Rothschild and E. Stein
\cite{Rothschild-Stein} and G. B. Folland \cite{Folland-subelliptic}. 
 
Suppose that $\L$ satisfies the two step strong H\"ormander condition and $X_0=\sum_{k=1}^m c_k X_k$. We make the following simple observation on the hypoelliptic bridge $(y_t)$:
$$\E \int_0^1 \left|d \log q_{1-s} (\cdot, z_0)(X_k(y_s))\right| ds<\infty,$$
 in particular $(y_t, 0\le t\le 1)$ is a continuous semi-martingale. The integral bound is obtained from small time estimates on the fundamental solution and its gradient, the latter from H. Cao and S.-T. Yau \cite{Cao-Yau}.  The Gaussian bounds for $q_t$ depend on the volume of the intrinsic metric balls $B_x(\sqrt t)$ for small time and on the Euclidean ball for large time. Around $x$ the metric distance is comparable with $\rho^{1\over l(x)}$ where $\rho$ is the Riemannian distance. The larger is $l(x)$, the more singular is the heat kernel at $0$.
Hence the semi-martingale property for diffusions satisfying two-step H\"ormander condition does not hint for a generalisation.
It is tempting to argue that this property fails when $l(x)$ is sufficiently large. On the other hand the following results are proved recently: the Brownian bridge concentrates on the sub-Riemannian geodesic at $t\to 0$. See I. Bailleul, L. Mesnager and J. Norris \cite{Bailleul-Mesnager-Norris}, and Y. Inahama \cite{Inahama}. Since the semi-martingale property depends on properties of the heat kernel for small time,
and since the sub-Riemannian geodesic is horizontal in whose direction the singularity in $t$ should be exactly $t^{-{n\over 2}}$, we tend to believe this semi-martingale property holds much more generally. 

\section{Preliminaries}
The purpose for this section is to familiarize ourselves with the basic properties of hypoelliptic bridges.
To condition a diffusion process from $x_0$ to reach $y_0$ at $1$, it is natural to assume there is a control path reaching $y_0$ from $x_0$ and the transition probability measures have positive densities, $q_t$, with respect to a Riemannian volume measure.
Hence it is reasonable to assume H\"ormander condition. It is well known that, at least when $M$ is a compact manifold,
 the conditioned diffusion induces a measure on the space of continuous paths. This is noted in J. Eells and K. D. Elworthy 
\cite{Eells-Elworthy},   J.-M. Bismut \cite{Bismut-martingale},   P.  Malliavin and M.-P. Malliavin \cite{Malliavin-Malliavin},  and B. Driver \cite{Driver-bb}.
 Eells and Elworthy were interested in relating the Wiener and pinned Wiener measures to the topology and geometry of the 
path space over a manifold,
which later involving the quest for an $L^2$ Hodge theory, see e.g. K. D. Elworthy and Xue-Mei Li \cite{Elworthy-Li-l2, Elworthy-Li-icm}.
Bismut, Driver, Malliavin and Malliavin were interested in the quasi-invariance of the pinned Brownian motion measure. An alternative
proof for the quasi-invariance theorem of  Malliavin and Malliavin was given  in M. Gordina \cite{Gordina-quasi}.

We discuss two cases: in the first $\L$ has an invariant measure $\mu$, i.e. $\int \L f d\mu=0$ for any $f\in C_K^\infty$,
and in the second we assume estimates on the heat kernel. We begin with the first case.
In general we do not know  there is a global solution to $\L^* \mu=0$.  
If $\L$ satisfies strong H\"ormander condition, and $M$ is compact or $\L$ is symmetric, the $\L$-diffusion $(x_t)$ has
an invariant measure.
 If $\{X_1, \dots, X_m\}$ are linearly independent they determine a sub-Riemannian metric. 
The sub-elliptic Laplacian $\Delta_H$ is defined to be $ \trace \div \nabla^H$ where $\nabla^H$ is the sub
Riemannian gradient and the divergence is with respect to a volume form $\mu$. Then
$\Delta_H=\sum_{i=1}^m L_{X_i}L_{X_i}+X_0$ where $X_0=-\sum_{i=1}^m \div_\mu(X_i)X_i $. If the sub-Riemannian metric is complete,
 then $\Delta_H$ is essentially self adjoint on $L^2(M; \mu)$,  see  R. Strichartz \cite{Strichartz-sub-riemannian-geom}.   
  If the sub-Riemannian metric is the restriction of a complete Riemannian metric or if the sub-Riemannian structure is obtained from left invariant vector fields on a lie group,  $M$ is a complete metric space with respect to the sub Riemannian distance $d$. In this paper we do not
  use a sub-Riemannian structure and will however comment on this at the end of the paper.

Throughout this paper $x_t$ is assumed to be conservative, otherwise the set of paths considered would exclude the
paths with life time less than $1$, which we are not willing to compromise. For simplicity we drop the subscript $1$ in $q_1$.
 If $f:M\to \R$ is a differentiable function we define
its horizontal gradient to be
$\nabla^H f=\sum_{i=1}^m X_i df(X_i)$.
Let $\hat \L$ denote the adjoint operator with respect to a, not necessarily finite, invariant measure $\mu$, i.e. $\int \L f g d\mu=\int f\hat \L g d\mu$. 
Denote  $\hat x_t$ the adjoint process. 

\begin{lemma}
\label{theorem1}
If  $\L^* \mu=0$ has a solution and the adjoint process is conservative,
the hypoelliptic bridge determines a probability measure 
on $C_{x_0,y_0}([0,1];M)$.
\end{lemma}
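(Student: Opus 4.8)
The plan is to construct the bridge measure explicitly through its finite-dimensional distributions and then to upgrade it to a measure on continuous paths by viewing the bridge on each half of the time interval as a Doob $h$-transform of a conservative diffusion. The two conservativeness hypotheses---one for $(x_t)$, assumed throughout, and one for the adjoint process $(\hat x_t)$---are exactly what will guarantee that the conditioned path neither explodes before time $1$ nor fails to reach $y_0$ at time $1$.

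First I would write down, for $0<t_1<\cdots<t_n<1$, the candidate finite-dimensional distribution
\begin{equation*}
\frac{1}{q_1(x_0,y_0)}\, q_{t_1}(x_0,z_1)\Big(\prod_{i=2}^n q_{t_i-t_{i-1}}(z_{i-1},z_i)\Big) q_{1-t_n}(z_n,y_0)\,\mu(dz_1)\cdots\mu(dz_n).
\end{equation*}
Under the strong H\"ormander condition together with the existence of a control path joining $x_0$ to $y_0$, the kernel is smooth and strictly positive, so the normalising constant $q_1(x_0,y_0)$ is nonzero and the expression is well defined. Consistency under marginalisation is precisely the Chapman--Kolmogorov identity $\int q_s(x,z)q_t(z,y)\,\mu(dz)=q_{s+t}(x,y)$, and the same identity gives total mass one; Kolmogorov's extension theorem then produces a probability measure $\mathbb{Q}$ on the product space indexed by $(0,1)$.

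Next I would produce a continuous modification on each interval $[0,1-\varepsilon]$. The function $h(t,x)=q_{1-t}(x,y_0)$ is strictly positive and, by hypoellipticity, smooth, and it is space--time harmonic, $(\partial_t+\L)h=0$; hence on $[0,1-\varepsilon]$ the bridge is the $h$-process of $(x_t)$, a diffusion whose generator is $\L$ augmented by a drift in the direction $\nabla^H\log h$. Equivalently, its law restricted to $\F_{1-\varepsilon}$ is absolutely continuous with respect to that of $(x_t)$, with the bona fide probability density $h(1-\varepsilon,x_{1-\varepsilon})/h(0,x_0)$. Since $(x_t)$ is conservative and therefore carried on continuous paths, absolute continuity transfers this property: $\mathbb{Q}$ is carried on paths continuous on $[0,1-\varepsilon]$ and starting at $x_0$.

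The main obstacle is the endpoint $t\to 1$, where $h$ becomes singular and the $h$-drift blows up, so the forward argument breaks down there. Here I would invoke time reversal: because $\mu$ is invariant and $\hat\L$ is the $\mu$-adjoint, the transition density of the adjoint process satisfies $\hat q_t(x,y)=q_t(y,x)$, and substituting this relation into the time-reversed finite-dimensional distributions identifies the law of $(y_{1-s})$ with that of the $\hat\L$-bridge from $y_0$ to $x_0$. Applying the previous paragraph to this adjoint bridge---which is legitimate precisely because $(\hat x_t)$ is assumed conservative---yields a continuous modification of $(y_t)$ on $[\varepsilon,1]$ with $y_t\to y_0$ as $t\to 1$. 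Gluing the two continuous versions on the overlap $[\varepsilon,1-\varepsilon]$ produces a path in $C_{x_0,y_0}([0,1];M)$ carrying full $\mathbb{Q}$-measure, which is the asserted probability measure.
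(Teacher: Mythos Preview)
Your argument is correct and follows essentially the same route as the paper: realise the bridge on $[0,1-\varepsilon]$ as a Doob $h$-transform of the conservative $\L$-diffusion (hence supported on continuous paths), then reverse time and identify $(y_{1-t})$ with the $\hat\L$-bridge from $y_0$ to $x_0$, use conservativeness of the adjoint process to run the same $h$-transform argument near $t=1$, and glue on the overlap. The only cosmetic difference is that the paper takes $q_t$ as a density with respect to the Riemannian volume, so the duality reads $m(x)q_t(x,y)=m(y)\hat q_t(y,x)$ with $m=d\mu/d\mathrm{vol}$, whereas you work with densities against $\mu$ itself and obtain the cleaner $\hat q_t(x,y)=q_t(y,x)$; either convention yields the same time-reversal identification.
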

\begin{proof}
Let $(x_t)$ be an $\L$-diffusion and $(y_t)$ the conditioned bridge process.  
Restricted to an interval $[0, 3/4]$,  $y_t$ is a `Doob transform' of $(x_t)$. Let $\{w_t^i\}$ be a family of real valued independent one dimensional Brownian motions. Then $x_t$ and $y_t$ can be represented as solutions to the equations with initial values $x_0=y_0$,
$$dx_t =\sum_{i=1}^m X_i(x_t) \circ dw_t^i+ X_0(x_t)dt,$$
$$dy_t =\sum_{i=1}^m X_i(y_t) \circ dw_t^i+ X_0(y_t)dt +\nabla^H \log q_{1-t} (y_t, y_0)dt,$$
where the gradient is with respect to the first variable.

It is easy to see that $\exp({N_t})={ q_{1-t}(x_t, y_0)\over q(x_0, y_0)}$.
Since $\E { q_{1-t}(x_t, y_0)\over q(x_0, y_0)}=1$, $\exp({N_s}, 0\le s\le t)$ is a martingale.
If $F$ is supported on continuous paths defined up to a time $t<1$, then
$\E F(y_{\cdot} )=\E F(x_{\cdot}) e^{N_t}$. 
From this and $\E N_t=1$, we see that the finite dimensional distributions of $(y_t)$ agree with that of the
conditioned process, when restricted to $[0,t]$.
  Since $(x_t)$ admits a continuous modification and hence determines a probability measure on $C([0,3/4];M)$, 
so does $(y_t)$. 

The invariant measure $\pi$ is a distributional solution to $\L^*m=0$ where 
$$\L^*=\sum_{i=1}^mL_{X_i}L_{X_i}- L_{X_0} 
   -2\sum_{i=1}^m\div(X_i) L_{X_i}
+\div(X_0)  -\sum_{i=1}^m d(\div (X_i))(X_i) $$
is the $L^2$ adjoint of $\L$ with respect to the volume measure, with respect to which the divergence is also taken. Then $\L^*$
satisfies also the strong H\"ormander condition. By the standard theory, see L. Hormander \cite{Hormander-hypo-acta},  $\mu$ has a strictly positive smooth density $m$.

If $\hat x_t$ is adjoint to $(x_t)$, with respect to $m$, its Markov generator has the same leading term as $\L$
and satisfies also strong H\"ormander condition. We denote by $\hat q_t$ its smooth density and there is the following identity:
$m(x)q_t(x,y)=m(y)\hat q_t(y,x)$.
Since the $\hat \L$ diffusion does not explode, we condition $\hat x_t$ to reach $x$ from $y$ in time $1$. The corresponding process 
is denoted by $\hat y_t$. Then
$\hat y_{1-t}$ has the same distribution as $y_t$. This follows from 
 $$q^{ x_0, y_0}_{t_1, \dots, t_n}={ q_{t_1}(x_0, x_1) \dots q_{t_n-t_{n-1}}(x_{n-1}, x_n)q_{1-t_n}(x_n, y_0)\over q(x_0, y_0)},  \quad t_i<1,$$
 in which we replace $q$ by $\hat q$.
By the same argument as above, we see that $\hat y_t$ has a continuous modification on $[0, 3/4]$. Thus $x_t$
determines a probability measure on $C_{x_0, y_0}([0,1];M)$.  The probability measure on the Borel $\sigma$-algebra of $M^{[0,1]}$,
 agrees with those determined by the continuous modification
of $x_t$, when restricted to paths on $[0, 3/4]$ and $[1/4, 1]$. The required conclusion follows.
\end{proof}

We move on to results based on heat kernel estimates and begin with reviewing Gaussian upper bounds for the fundamental solutions. The Markov generator for an elliptic diffusion is necessarily of the form ${1\over 2}\Delta+Z$ where $\Delta$ is the Laplace-Beltrami operator for some Riemannian metric on $M$ and $Z$ is a vector field, in which case the diffusion is a Brownian motion with drift $Z$. Once we understand the case of $\L={1\over 2}\Delta$, an additional (well behaved)  drift vector field $Z$ can be taken care of.  For a detailed review on heat kernel upper bounds  see L. Saloff-Coste \cite{Saloff-Coste-heat-kernel}.
Take first $\L={1\over 2}\Delta$. If the Ricci curvature of the manifold is bounded from below by $-K$ where $K$ is a positive number, then $p_t(x,x)\sim t^{-{n\over 2}}$ where $n=\dim(M)$ and $t\in (0,1)$.
This is a theorem of P. Li and S.-T. Yau \cite{Li-Yau},  extending the result of J. Cheeger and S.-T. Yau
\cite{Cheeger-Yau}.  In general if there exists an increasing function 
$\beta: (0, \infty)\to \R_+$ such that for all $t>0$ there is the on diagonal estimate $p_t(x,x)\le  {1\over \beta(t)}$
and if $\beta$ satisfies the doubling property, $\beta(2t)\le A\beta(t)$ for all $t>0$ and some number $A$,
then for some constant $D, \delta, $ and $C$,
\begin{equation}\label{off-diagonal}
p(t,x,y) \le  {C \over \beta(\delta t) }e^{-{\rho^2(x,y)\over 2Dt}}.
\end{equation}
See A. Grigoryan \cite{Grigoryan-book} and A. Bendikov and L. Saloff-Coste \cite{Bendikov-Saloff-coste-heat-kernel} for a detailed account.  In the case of $M=\R^n$, a Sobolev inequality implies Nash's inequality which in turn implies an on diagonal estimate with $\beta(t)=t^{n\over 2}$, see J. Nash \cite{Nash-inequality}.
Conversely by a theorem of N. Varopoulos \cite{Varopoulos}, generalised by E. Carlen, S. Kusuoka and D. Stroock \cite{Carlen-Kusuoka-Stroock}, the on diagonal estimate implies Sobolev's inequality.

If $\L=\sum_{k=1}^m L_{X_k}L_{X_k}+L_{X_0}$ is not elliptic, but satisfies H\"ormander condition, 
the bounds on the fundamental solution have different orders depending on 
whether the time is small or large.
To use Kolmogorov's Theorem,  it is for the small time we need the more refined upper bound. 
Under H\"ormander condition the fundamental solution $q_t$ of the parabolic equation ${\partial \over \partial t}=\L$ is expected to admit a Gaussian upper bound. For small time, it is better to use the {\it intrinsic metric distance} $d$  defined by
the formula:
$$d(x,y)=\inf\left\{l \;|\; \gamma : [0, l]\to M, \dot \gamma=\sum_{i=1}^m a_i X_i, \sum_{i=1}^m  (a_i(s) )^2\le 1\right\}.$$
where $\gamma$ is taken over all Lipschitz continuous curves on a compact interval connecting $x$ to $y$. This intrinsic distance is a
 natural distance for $\L$, i.e.  $d$ induces the original topology of the manifold. 

For diffusions on a compact manifold satisfying strong H\"ormander's conditions and with the drift $X_0$ vanishing identically, there is the following estimates in terms of the volume of the metric ball $B_x(r\sqrt t)$ centred at $x$:
\begin{equation}
\label{Gaussian-bounds}
{C_1\over \vol(B_x(\sqrt t))} e^{-{C_3 d^2(x,y)\over t}}\le q_t(x,y) \le {C_2\over \vol(B_x(\sqrt t))} e^{-{C_4 d^2(x,y)\over t}},
\end{equation}
for all $x,y\in M$ and all $t>0$. 
 This is a theorem of D. Jerison and A. Sanchez-Calle \cite{Jerison-Sanchez-Calle}. In
  A. Sanchez-Calle \cite{Sanchez-Calle78}, this upper bound is obtained for $(x,y)$ 
  satisfying the relation $d(x,y)\le \sqrt t$ and $t\le 1$. Estimates in (\ref{Gaussian-bounds}) for the heat kernel is effective
 only for small times. Indeed, as $q_t(x,y)$ is smooth and strictly positive,  we obtain trivial upper and lower constant bounds
 for $q_t$. It is another matter to obtain the best constants.
 
 For two points $x, y$ close to each other,
\begin{equation}
\label{distance-compare}
{1\over c} \rho(x,y) \le d(x,y)\le c \rho(x,y)^{1 \over l(x)},
\end{equation}
where $l(x)$ is the length in H\"ormander's condition, assuming that the intrinsic sub-Riemannian metric
associated with $\{X_1, \dots, X_m\}$ agrees with the restriction of the Riemannian metric defining $\rho$.
 If $M$ is compact and the vector fields are $BC^\infty$, then $d$ and $\rho$ are equivalent. 
 The upper bound for $d$ comes from the fact that any point in a small neighbourhood of a point $x$, of a uniform size, can be reached from $x$ by a controlled path.
This is essentially the Box-ball theorem of A. Nagel, E. Stein 
S. Wainger \cite{Nagel-Stein-Wainger}. See also R. Montgomery \cite{Montgomery-subriemannian}.
 For symmetric diffusions on $\R^n$ satisfying a `uniform H\"ormander's condition' and $t$ small,  estimates of the above form were proved in S. Kusuoka and D. Stroock \cite{Kusuoka-Stroock-III}.  For large $t$ the Euclidean metric is more relevant, see S. Kusuoka and D. Stroock \cite{Kusuoka-Stroock-Ann}.  We do not need sharp estimates on the heat kernel, however we mention that sharp estimates was obtained in  E. B. Davies \cite{Davies-sum-squares-88} 
for symmetric diffusions on a compact manifold. Also Varadhan's short time asymptotics for $\log q_t$ was given in G. Ben Arous and R. L\'eandre \cite{BenArous-Leandre-II} and R. L\'eandre \cite{Leandre-majoration,Leandre-hypoelliptic-kernel}. See also P. Friz and S. De Marco \cite{Friz-DeMarco} for a recent study.

Although an estimate of the type  (\ref{Gaussian-bounds}) is sufficient for us,  the intrinsic distance is
 not easy to use.  The fundamental solution $q_t$ is the density of the probability distribution of the $\L$-diffusion
 evaluated at $t$ with respect to the volume measure. In geodesics coordinates we easily integrate a function of $\rho$, not so easily a function of $d$.
For this reason it is convenient to use the argument that established (\ref{distance-compare}) to convert the quantities involving $d^2$ to $\rho^2$.  Let us consider the volume of the metric ball centred at $x$ with radius $\sqrt t$.
When $t$ is sufficiently small, one could apply (\ref{distance-compare}) for crude estimates. A much refined estimate
 is given in G. Ben Arous and R. L\'eandre \cite{BenArous-Leandre-II}.  For example we know that
  for $x, y$ not in each other's cut locus, as $t\to 0$
$$q_t(x,y) \sim {C(x,y)\over  t^{n\over 2}} e^{- {d^2(x,y)\over 2t}}$$
 On the diagonal $q_t(x,x) \sim c(x) t^{-{Q(x)\over 2}}$ for a number $Q(x)$ relating to $l(x)$, which holds also if $X_0$ is 
 in the span of the diffusion vector fields and their first order Lie brackets.
 G. Ben Arous and R. L\'eandre gave also an example where $X_0\not =0$ and where $q_t$ decreases exponentially on the diagonal.

\begin{lemma}
\label{hypo-bridge}
Suppose that $\L$-diffusion is conservative, has a smooth  density $q_t$ and 
\begin{enumerate}
\item For any $a_0>0$, $\sup_{ a_0\le t \le T} \sup_{x,y}q_t(x,y) <\infty$.
\item There exists  positive numbers $ \delta_0$, $a$  and $p>1$, s.t. for all $0\le s<t<T$,
\begin{equation}
{\begin{split}
&\sup_{s>{1\over 4}, |t-s| <t_0}{ \int_{M\times M}  \rho^p (x, y)q_s(x_0, x)q_{t-s}(x, y) dy dx \over  |t-s|^{1+\delta_0}} \le C;\\
&\sup_{0<t<{3\over 4}, |t-s| <t_0}
{\int_{M\times M}  { \rho^p(x,y) q_{t-s}(x, y) q_{1-t}(y, y_0) 
} dx\,dy \over  |t-s|^{1+\delta_0}} \le C.
\end{split}}
\end{equation}
\end{enumerate}
Then there exist positive constants $t_0$ and $C$ such that for $|t-s|\le t_0$,  $\E \rho^p( y_s, y_t) \le C|s-t|^{1+\delta}$.
\end{lemma}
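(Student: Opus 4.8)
The plan is to write $\E\,\rho^p(y_s,y_t)$ as an explicit double integral against the bridge's finite-dimensional density and then to control it by cutting the time interval into a piece near the starting time and a piece near the terminal time. From the finite-dimensional law of the conditioned process, for $0\le s<t<1$ the pair $(y_s,y_t)$ has joint density $(x,y)\mapsto q_s(x_0,x)\,q_{t-s}(x,y)\,q_{1-t}(y,y_0)/q(x_0,y_0)$ with respect to $dx\,dy$, where the normalisation $q(x_0,y_0)=q_1(x_0,y_0)$ is a strictly positive finite constant by the smoothness and strict positivity of $q_t$ under H\"ormander's condition. Hence
$$
\E\,\rho^p(y_s,y_t)=\frac{1}{q(x_0,y_0)}\int_{M\times M}\rho^p(x,y)\,q_s(x_0,x)\,q_{t-s}(x,y)\,q_{1-t}(y,y_0)\,dx\,dy.
$$
Of the three kernels only the middle one $q_{t-s}(x,y)$ carries a small-time singularity, and it is exactly the factor paired with $\rho^p$ in both integral hypotheses of~(2); the other two are the boundary kernels $q_s(x_0,\cdot)$ and $q_{1-t}(\cdot,y_0)$.

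I would fix $t_0\in(0,1/2)$, no larger than the constant appearing in~(2), and cover every pair with $|t-s|<t_0$ by the two overlapping regions $\{t\le 3/4\}$ and $\{s\ge 1/4\}$: if $s<1/4$ and $t>3/4$ then $t-s>1/2>t_0$, which is excluded, so at least one region always applies. On $\{t\le 3/4\}$ one has $1-t\ge 1/4$, so assumption~(1) bounds the terminal kernel, $q_{1-t}(y,y_0)\le K_0:=\sup_{1/4\le u\le T}\sup_{x,y}q_u(x,y)<\infty$; extracting this constant leaves the integral $\int \rho^p(x,y)\,q_s(x_0,x)\,q_{t-s}(x,y)\,dx\,dy$, which is the first integral in~(2) and is therefore at most $C|t-s|^{1+\delta_0}$. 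On $\{s\ge 1/4\}$ the initial kernel is instead bounded, $q_s(x_0,x)\le K_0$, and what remains is $\int \rho^p(x,y)\,q_{t-s}(x,y)\,q_{1-t}(y,y_0)\,dx\,dy$, the second integral in~(2), again at most $C|t-s|^{1+\delta_0}$.

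Combining the two regions and absorbing $K_0/q(x_0,y_0)$ into a single constant yields $\E\,\rho^p(y_s,y_t)\le C|t-s|^{1+\delta_0}$ for all $|t-s|\le t_0$, so the statement holds with $\delta=\delta_0$. The proof carries no hard estimate of its own: the analytic content---the small-time heat-kernel bounds that make the two displayed integrals $O(|t-s|^{1+\delta_0})$---is already packaged into hypothesis~(2), and the only probabilistic input is the Markov factorisation of the bridge density. Accordingly, the one step that needs genuine care is the bookkeeping of the splitting: one must verify that the two cutoffs really do exhaust all admissible $(s,t)$ once $t_0<1/2$, and, crucially, that in each region the boundary kernel being discarded is the one whose time argument is bounded away from $0$, so that~(1) legitimately bounds it and leaves behind precisely the integrand furnished by the matching half of~(2).
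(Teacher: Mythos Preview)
Your argument is essentially identical to the paper's: write out the joint bridge density $q_s(x_0,x)q_{t-s}(x,y)q_{1-t}(y,y_0)/q(x_0,y_0)$, split the time range into two overlapping pieces so that in each piece one of the boundary kernels has time parameter bounded below by $1/4$ and may be dominated via assumption~(1), and then invoke the matching half of assumption~(2) on what remains. The only cosmetic differences are that the paper takes $t_0<1/4$ and lists the degenerate endpoints $s=0$ and $t=1$ as separate (one-variable) cases, whereas you take $t_0<1/2$ and leave those endpoints implicit; neither changes the substance.

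One bookkeeping point worth flagging (shared with the paper's own write-up): in the region $\{t\le 3/4\}$ you invoke the \emph{first} displayed bound of~(2), whose stated range is $s>1/4$, and in $\{s\ge 1/4\}$ you invoke the \emph{second}, whose stated range is $t<3/4$; so the labels in the hypothesis and the cases in the proof are crossed. This is evidently a typographical slip in the lemma's statement rather than a gap in the argument, but it would be cleaner to note it.
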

Note we do not assume the diffusion is symmetric.  By (\ref{off-diagonal}) the lemma 
applies to $\L={1\over 2} \Delta$ on a complete Riemannian manifold whose  Ricci curvature is bounded from below.  
The proof for the Lemma is included for reader's convenience. 
\begin{proof} 
We may assume $t_0<1/4$ and consider the following cases: $0<s<t<{3\over 4}$;
$0<{1\over 4}<s<t$;  $s=0$; $t=1$. We begin with the last case.
\begin{equation*}
{\begin{split}
\E \rho^p(y_s, y_0)&={1\over q (x_0,y_0)}\int_M \rho^p(x,y_0) q_s(x_0, x)q_{1-t} (x, y_0) dx\\
&\le {\sup_{s\ge {1\over 4}} \sup_{y} q_s (x, y_0) 
\over q (x_0,y_0)}\int_M \rho^p(x,y_0) q_{1-s}(x, y_0) dx.
\end{split}}
\end{equation*}
   If $0<s<t<{3\over 4}$,
\begin{equation*}
{\begin{split}
\E \rho^p(y_s, y_t)&
=\int_M q_{1-t} (y, y_0)\int_M { \rho^p(x,y) q_s(x_0, x)q_{t-s}(x, y) 
 \over q (x_0,y_0)} dxdy\\
 &\le { \sup_{t<{3\over 4}} \sup_yq_{1-t} (y, y_0)\over q (x_0, y_0)}\int_M   \int_M q_s(x_0, x)  { \rho^p(x,y) q_{t-s}(x,y) 
} dydx,
\end{split}}
\end{equation*}
 concluding the estimates. The estimation for the other cases are similar.
To show that the finite dimensional distributions $q_t^{x_0, y_0}$
 determines a probability measure on $C([0,1];M)$ it is sufficient to prove that there exist $p>1$, $\delta_0>0$, and $t_0>0$
 such that if $ |t-s|<t_0$ and  $0\le s \le t\le 1$,
 $\E \rho(y_t,y_s)^p\le C|t-s|^{1+\delta_0}$. This completes the proof.
\end{proof}

 If $q$ is a continuous and $M$ is compact, assumption (1) is automatic.
 We look into condition (2) in more detail.
Denote  $\mu$ the Euclidean surface 
measure on $S^n$, $c_x(\xi)$ the distance to the cut point of $x$ along the geodesic $\gamma_x(\xi)$ in the direction of $\xi\in T_xM$. Denote $ST_xM$ the unit sphere in $T_xM$ and set
\begin{equation*}
{\begin{split}
&D_x=\{t\xi: \xi\in ST_xM, t\in[0, c(\xi))\}=T_xM\setminus C_x\\
 &D_x(r)=\{\xi\in ST_xM: r<c(\xi)\}.
\end{split}}
\end{equation*}
where $C_x$ is the Riemannian cut locus at $x$. Note that $D_x(r)$ decrease with $r$.
On $D_x$, $\exp_x$ is a diffeomorphism onto its image.  Denote $J_x(v)$ the determinant of
$(d\exp_x)_v$ identifying the tangent spaces of $T_xM$ with itself. Furthermore we denote $A_x(r)$ the lower area function:
$$A(x,r)=\int_{D_x(r)} J_x(r\xi) d\mu(\xi)={1\over r^{n-1}}\int_{D_x} J_x(\eta) d\mu(\eta).$$

If  $A(y_0, r)$ is bounded then the last inequality in the Lemma below holds trivially. 
\begin{lemma}\label{lemma-assumption}
Suppose that there exist positive constants $C_1, C_2, C_3, \alpha, a, t_0<1$, positive increasing real valued functions $\beta_i$ decaying at most polynomially near $0$,
such that the following estimates hold for $t<t_0$,
\begin{equation*}
{\begin{split}
&q_t(x,y) \le { C_1 \over \beta_2(t) },\;
q_t(x,y) \le { C_1 \over \beta_1(t) }e^{-{ C_2\rho^{2\alpha}(x,y)\over t }} \hbox{ when }  \quad  \rho(x,y)\ge a\sqrt t;\\
& \sup_{u\ge 0} \int_{au}^\infty   r^{p+n \over \alpha}e^{-{C_2 r^2}} A(x,r^{1\over \alpha} u^{1\over \alpha})dr<\infty.
\end{split}}
\end{equation*}
Then assumption (2) of Proposition \ref{hypo-bridge} holds.
\end{lemma}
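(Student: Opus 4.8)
The plan is to verify the two displayed bounds of assumption (2) of Lemma~\ref{hypo-bridge} by reducing each to a single uniform estimate on an inner radial integral and then absorbing the remaining kernel via assumption (1). Write $\tau=|t-s|$. In the first bound the factor $q_s(x_0,x)$ carries a time $s>\frac14$ bounded away from $0$ and satisfies $\int_M q_s(x_0,x)\,dV(x)=1$ by conservativeness, so after the inner integration in $y$ the bound is at most $\sup_x G_x(\tau)$; in the second bound the factor $q_{1-t}(y,y_0)$ carries $1-t>\frac14$, is bounded uniformly by assumption (1), and integrates to a finite constant over $y$ (automatic when $M$ is compact, and guaranteed in general by the boundedness of $A(y_0,\cdot)$ noted before the statement). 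Both bounds therefore follow once I show, uniformly in the base point $w$,
\[
G_w(\tau):=\int_M \rho^p(w,\cdot)\,q_\tau(w,\cdot)\,dV \le C\,\tau^{1+\delta_0},
\]
the free variable being forward for the first bound and backward for the second; since the two kernel estimates in the hypothesis depend on $x,y$ only through $\rho(x,y)$, the same computation applies to either variable.

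First I would pass to geodesic polar coordinates centred at $w$. Off the cut locus $\exp_w$ is a diffeomorphism and $dV=J_w(r\xi)\,r^{n-1}\,dr\,d\mu(\xi)$, so for every nonnegative radial integrand $f(\rho)$ one has $\int_M f(\rho(w,\cdot))\,dV=\int_0^\infty f(r)\,A(w,r)\,r^{n-1}\,dr$, with $A$ the lower area function. I then split $G_w(\tau)$ at the threshold $\rho=a\sqrt\tau$ separating the two regimes of the hypothesis. On the on-diagonal region $\rho<a\sqrt\tau$ I use $q_\tau\le C_1/\beta_2(\tau)$ together with $A(w,r)\le A(w,0)=\mu(ST_wM)$ for small $r$, which gives
\[
\int_{\rho<a\sqrt\tau}\rho^p q_\tau\,dV \le \frac{C_1\,\mu(ST_wM)}{\beta_2(\tau)}\int_0^{a\sqrt\tau} r^{p+n-1}\,dr = \frac{C}{\beta_2(\tau)}\,\tau^{(p+n)/2}.
\]
Since $\beta_2$ decays at most polynomially near $0$, say $\beta_2(\tau)\ge c\,\tau^{N}$, this is $O\!\left(\tau^{(p+n)/2-N}\right)$, which is $\le C\tau^{1+\delta_0}$ as soon as $p$ is taken large enough (and $\delta_0$ fixed accordingly).

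On the off-diagonal region $\rho\ge a\sqrt\tau$ I would insert the Gaussian bound and make the substitution normalising the exponent, $s^2=r^{2\alpha}/\tau$, i.e. $r=(s\sqrt\tau)^{1/\alpha}$. This sends $e^{-C_2\rho^{2\alpha}/\tau}$ to $e^{-C_2 s^2}$, turns $A(w,r)$ into $A(w,s^{1/\alpha}u^{1/\alpha})$ with $u=\sqrt\tau$, and factors out a power $\tau^{(p+n)/(2\alpha)}$; the residual $s$-integral is precisely the one appearing in the third hypothesis, hence bounded by $\sup_{u\ge0}\int_{au}^\infty(\cdots)<\infty$ uniformly in $\tau$ and in $w$. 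The prefactor $\tau^{(p+n)/(2\alpha)}/\beta_1(\tau)$ is again $\le C\tau^{1+\delta_0}$ once $p$ is large, using that $\beta_1$ decays at most polynomially. Collecting the two regions gives the uniform bound on $G_w(\tau)$, and together with the bounded outer factors this yields assumption (2).

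The main obstacle is this off-diagonal step: one must align the exponent, the power of $r$, and the argument of the area function so that, after the change of variables, the surviving integral is exactly the quantity assumed finite in the third hypothesis, and then check that the lower limit produced by the substitution (which equals $a^\alpha\tau^{(\alpha-1)/2}$ rather than $a\sqrt\tau$) does not enlarge the domain of integration — here the nonnegativity of the integrand and the presence of $\sup_{u\ge0}$ are exactly what is needed. The only genuinely quantitative input is the freedom to take $p$ (and hence $\delta_0$) large enough to beat the at-most-polynomial blow-up of $1/\beta_1$ and $1/\beta_2$ near $0$; everything else is the routine change of variables in polar coordinates and the reduction via conservativeness and assumption (1).
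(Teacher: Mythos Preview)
Your argument is essentially the paper's own: pass to polar coordinates about the appropriate centre, split the radial integral at $a\sqrt\tau$, apply the on-diagonal bound on the inner shell and the Gaussian bound outside, then rescale $r\mapsto r^{1/\alpha}\tau^{1/(2\alpha)}$ so that the surviving integral is exactly the third hypothesis; finally choose $p$ large to beat the polynomial blow-up of $1/\beta_i$. One small correction: the inequality $A(w,r)\le A(w,0)$ is not true in general (e.g.\ in negative curvature $J_w$ grows), so on the inner shell you should instead use that $A(w,\cdot)$ is continuous and hence bounded on $[0,a\sqrt{t_0}]$ --- this is what the paper does, bounding $\int_0^{a\sqrt\tau} r^{n+p-1}A(x,r)\,dr\le (a\sqrt\tau)^{n+p-1}\int_0^{a\sqrt{t_0}}A(x,r)\,dr$ --- and the rest of your estimate goes through unchanged.
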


\begin{proof}
Let us consider $p>1$,  $0\le s\le t\le {3\over 4}$ and $|t-s|\le t_0$. The other cases are similar.
Working in polar coordinates we see that 
\begin{equation*}
{\begin{split}
&\int_M  q_s(x_0, x)  \int_M  { \rho^p(x,y) q_{t-s}(x,y) 
} dydx\\
=&\int_M q_s(x_0, x) \int_0^\infty   r^p \int_{D_{x}(r)}  q_{t-s}(y, \exp_{x}(r\xi))J_{x}(r\xi) \mu(d\xi) r^{n-1}dr dx.
 \end{split}}
 \end{equation*}
 We plug in the assumed upper bounds for the heat kernel in the respective regions
 to see the right hand side is bounded by:
 \begin{equation*}
{\begin{split}
&\int_M q_s(x_0, x)  \int_0^{a\sqrt {t-s}}  
 r^{n+p-1}   {C_1\over \beta_2(t-s)}\int_{D_{x}(r)}  J_{x}(r\xi) \mu(d\xi) \;dr  dx\\
 &+\int_M q_s(x_0, x){C_1\over \beta_1(t-s)}\int_{a\sqrt {t-s}} ^\infty 
 r^{n+p-1} e^{-{C_2 r^{2\alpha}\over t-s}}\int_{D_{x}(r)}   J_{x}(r\xi) \mu(d\xi)\,drdx,
 \end{split}}
 \end{equation*}
 which is further bounded by
  \begin{equation*}
 {\begin{split}
 &  {C_1\over \beta_2(t-s)}
a^{n+p-1} (t-s)^{^{n+p-1}\over 2}\int_M q_s(x_0, x) dx\int_0^{a\sqrt {t-s}}  
  A(x,r)dr  \\
 &+{C_1\over \beta_1(t-s)}\int_M  dx q_s(x_0, x)
\int_{a\sqrt{t-s}}^\infty   r^{p+n-1}e^{-{C_2 r^{2\alpha}\over t-s}} A(x,r)dr.
\end{split}}
\end{equation*}
This means,
 \begin{equation*}
 {\begin{split}
&\int_M  q_s(x_0, x)  \int_M  { \rho^p(x,y) q_{t-s}(x,y) 
} dydx\\
&\le   {C_1a^{n+p-1} (t-s)^{{n+p-1}\over 2}\over \beta_2(t-s)}
\int_M q_s(x_0, x) \int_0^{a\sqrt {t_0}}  
  A(x,r)dr  dx \\
 &+
{C_1(t-s)^{p+n\over 2\alpha}\over \beta_1(t-s)}\int_M  dx q_s(x_0, x)
\int_{a\sqrt{t-s}}^\infty   r^{p+n \over \alpha}e^{-{C_2 r^2}} A(x,r^{1\over \alpha} (t-s)^{1\over 2\alpha})dr.
\end{split}}
\end{equation*}
Since $\beta_1(t), \beta_2(t)$ decays at most polynomially neat $0$, we may choose $p$ and $\delta>0$ such that
 the assumption (2)  of Proposition \ref{hypo-bridge} holds.
\end{proof}

 The conclusions of Lemma \ref{hypo-bridge} and \ref{lemma-assumption}  hold if 
 $M=\R^n$, $\L$ satisfies the following Kusuoka-Stroock's uniform H\"ormander's condition: there exists an integer $p$ such that $l(x)\le p$. The vector fields $\{X_1, \dots, X_m\}$ and their
iterated brackets up to order $p$ give rise to a $n\times n$ symmetric matrix that is
uniformly elliptic on $\R^n$. Also $X_0$ is in the linear span of $\{X_1, \dots, X_m\}$.
In fact,   there exist constants $M>1$ and $r_0$ such that
for any $t\in (0,1]$ and $x,y\in \R^n$,  \cite{Kusuoka-Stroock-III}, the upper bound in ({Gaussian-bounds}) holds
with $C_4={1\over C_2}$.
Also the lower surface function $A(x,r)$ is bounded by a constant, the last inequality in Lemma \ref{lemma-assumption}  is satisfied.
Assumption (2) in Lemma \ref{hypo-bridge} holds.
 For $t\ge 1$, S. Kusuoka and D. Stroock  proved the following \cite{Kusuoka-Stroock-Ann},
$q_t(x,y)\le M t^{-{n\over 2}} e^{-{|y-x|^2 \over Mt}}$,
which ensures assumption (1) in Lemma \ref{hypo-bridge}.

\section{The Semi-martingale Property}
\label{section3}
 Let $x_0, z_0 \in M$ and $(y_t, 0\le t<1)$ be the solution of the following equation
\begin{equation*}
{\begin{split}
dy_t &=\sum_{i=1}^m X_i(y_t) \circ dw_t^i + X_0(y_t) dt + \nabla^H \log q_{1-t}(\cdot, z_0)(y_t)dt, \quad y_0(\omega)=x_0\\
\end{split}}
\end{equation*}
 
\begin{theorem}
\label{theorem2}
  If $M$ is compact, $X_0$ is divergence free, and $\L$ satisfies the two step strong H\"ormander condition,
  then for each $i=1, \dots, m$, 
 $$\E \int_0^1 \left|d \log q_{1-s} (\cdot, z_0)(X_i(y_s))\right| ds<\infty.$$
\end{theorem}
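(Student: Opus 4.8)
The plan is to strip away both the randomness and the apparent singularity of the logarithmic derivative in one move, and then to reduce the whole statement to a purely deterministic small-time bound on the gradient of the fundamental solution. First I would write the integrand as a genuine logarithmic derivative,
$$d\log q_{1-s}(\cdot, z_0)(X_i(y_s)) = \frac{X_i q_{1-s}(y_s, z_0)}{q_{1-s}(y_s, z_0)},$$
where $X_i q_{1-s}(\cdot,z_0)$ denotes differentiation in the first variable. Since the integrand is nonnegative, Tonelli's theorem lets me exchange $\E$ and $\int_0^1 ds$. The law of the bridge at time $s<1$ has density $\frac{q_s(x_0,x)\,q_{1-s}(x,z_0)}{q(x_0,z_0)}$ with respect to the volume measure, as in the finite-dimensional description used in Lemma \ref{theorem1} (which needs no symmetry). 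Taking the expectation, the factor $q_{1-s}(x,z_0)$ in the denominator is cancelled \emph{exactly} by the same factor in the bridge density, which is the crucial point: the singularity carried by $1/q_{1-s}$ as $s\uparrow 1$ simply disappears, and I am left with
$$\E\int_0^1 \left| d\log q_{1-s}(\cdot,z_0)(X_i(y_s))\right| ds = \frac{1}{q(x_0,z_0)}\int_0^1\int_M |X_i q_{1-s}(x,z_0)|\, q_s(x_0,x)\,dx\,ds.$$

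Next I would split the $s$-integral at $s=1/2$. On $[0,1/2]$ the time $1-s$ stays in $[1/2,1]$, so by hypoellipticity and compactness $|X_i q_{1-s}(x,z_0)|\le C$ uniformly in $x$; since $(x_t)$ is conservative, $\int_M q_s(x_0,x)\,dx=1$, and this part contributes at most $C/2$. On $[1/2,1]$ the roles reverse: now $s\ge 1/2$ forces $q_s(x_0,x)\le C$ uniformly, and with $u=1-s\in(0,1/2]$ the estimate reduces to controlling $\int_0^{1/2}\int_M |X_i q_u(x,z_0)|\,dx\,du$. Here I would invoke the small-time gradient estimate of Cao and Yau \cite{Cao-Yau}, available under the two-step strong H\"ormander condition (with $X_0$ divergence free so that the volume measure is invariant and the sum-of-squares estimates apply), together with the Gaussian bound \eqref{Gaussian-bounds} of Jerison and Sanchez-Calle, in the form
$$|X_i q_u(x,z_0)|\le \frac{C}{\sqrt u}\,\frac{1}{\vol(B_x(\sqrt u))}\,e^{-d^2(x,z_0)/(Cu)},\qquad 0<u\le 1.$$

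The main obstacle is then the $x$-integral $\int_M \frac{1}{\vol(B_x(\sqrt u))}\, e^{-d^2(x,z_0)/(Cu)}\,dx$. The naive route of replacing $d$ by $\rho$ through \eqref{distance-compare} is too lossy, since it discards the anisotropy that makes $\vol(B_x(\sqrt u))$ decay faster than $u^{n/2}$; instead I would integrate the Gaussian directly against the intrinsic volume by the usual annular decomposition over $\{2^{k-1}\sqrt u< d(x,z_0)\le 2^k\sqrt u\}$, using the volume-doubling property of Carnot--Carath\'eodory balls (Nagel--Stein--Wainger \cite{Nagel-Stein-Wainger}) to compare $\vol(B_x(\sqrt u))$ with $\vol(B_{z_0}(2^k\sqrt u))$. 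This bounds the integral by a convergent geometric series, hence $\int_M \frac{1}{\vol(B_x(\sqrt u))}\, e^{-d^2(x,z_0)/(Cu)}\,dx\le C'$ independently of $u$, so that $\int_M |X_i q_u(x,z_0)|\,dx\le C\,u^{-1/2}$. Since $\int_0^{1/2} u^{-1/2}\,du<\infty$, the $[1/2,1]$ part is finite as well, and the theorem follows. The decisive feature is that a horizontal derivative costs only the factor $u^{-1/2}$, whose singularity at $u=0$ is integrable; this is exactly what the two-step hypothesis secures through the Cao--Yau estimate, and what is not guaranteed at higher step.
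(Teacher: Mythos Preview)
Your argument is essentially correct but follows a genuinely different route from the paper. The paper never cancels the $q_{1-s}$ in the denominator against the bridge density. Instead it applies the Cao--Yau differential Harnack inequality in its raw form
\[
\sum_i |X_i\log q_{1-s}|^2 \le \delta\,\frac{L_{X_0}q_{1-s}}{q_{1-s}} + \delta\,\frac{\partial_s q_{1-s}}{q_{1-s}} + \frac{C_1}{1-s}+C_2,
\]
takes the expectation of both sides under the bridge law, and then uses integration by parts (this is where $\div X_0=0$ enters) to move the $L_{X_0}$ and $\partial_s$ from $q_{1-s}$ onto $q_s$. On $[1/2,1]$ these derivatives of $q_s$ are uniformly bounded by compactness, leaving $\sqrt{C_3/q(x_0,z_0)+C_1/(1-s)+C_2}$, which is integrable. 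So the paper works at the level of $|\nabla\log q|^2$ throughout and never needs a pointwise bound on $|X_i q_u|$ itself.

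Your approach is more direct, but one attribution should be fixed: the pointwise estimate $|X_i q_u(x,z_0)|\le C u^{-1/2}\vol(B_x(\sqrt u))^{-1}e^{-d^2/(Cu)}$ is \emph{not} what Cao--Yau prove; their result is the Li--Yau--type differential inequality above, which does not by itself bound $|X_i\log q_u|$ pointwise (the $\partial_u q_u/q_u$ term is uncontrolled). The bound you want is rather a Jerison--S\'anchez-Calle derivative estimate (or its Kusuoka--Stroock analogue), obtained by parametrix methods, and it holds for horizontal derivatives at \emph{any} step, not just two. Once that is granted, your annular/doubling argument gives $\int_M|X_iq_u(\cdot,z_0)|\,dx\le Cu^{-1/2}$ and the proof closes. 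A pleasant consequence is that your route does not actually rely on the two-step hypothesis in any essential way (beyond whatever is needed to ensure the gradient bound in the presence of the drift $X_0$), which is consistent with the author's own remark in the introduction that the semi-martingale property is expected to hold more generally. By contrast, the paper's method is tied to Cao--Yau and hence to the two-step structural condition on $[[X_i,X_j],X_k]$; also, the role of $\div X_0=0$ is transparent there (integration by parts), whereas in your argument it is only invoked implicitly through the validity of the heat-kernel estimates.
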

If $\L={1\over 2}\Delta$, this is well known.
The standard proof relies on the following estimate on the heat kernel:
$|\nabla_x \log p_t(x,y)|\le C({1\over \sqrt t}+{ \rho(x,y)\over t})$, which can be proved probabilistically or
follows from the Gaussian type upper and lower bounds
and Hamilton's estimate for the heat kernel,  R. Hamilton\cite{Hamilton93}:
$$s |\nabla_x \log p_s(\cdot, y)|^2\le C_1 \log ({C_2\over s^{n\over 2} p_s(\cdot, y_0)}).$$
 See B. Driver \cite{Driver-bb} and the following books and survey: Bismut \cite{Bismut-book}, B. Driver \cite{Driver-curved} and E. Hsu \cite{Hsu-book}.
 In B. Kim \cite[Prop. 5.2]{Kim} the following 
inequality  is proved  for a positive bounded smooth solution, satisfying further suitable $L^2$ 
estimates:
  $t|\nabla \ln u(x,t)|\le C(1+t) \ln ({M\over u(x,t)})$. There $\L$ is a `sub-elliptic' operator.  If we apply this to the kernel $q_t$, 
  together with a favourable Gaussian lower bound for $\nabla \ln u$, e.g. (\ref{Gaussian-bounds}), 
assuming that the metric balls of volume $t$ is polynomial in $t$,  we have  $$|\nabla_x \log q_t(x,y)|^2\le C ({|\ln t|\over t}+{ \rho^2(x,y)\over t^2}).$$
In terms of integrability this estimate is slightly better than the corresponding one in \cite[Lemma 5.3]{Kim}.
  However it is still on the wrong side of critical integrability at $t=0$.
  
We give some examples where the theorem holds. (1) $M=SU(2)$, and $X_1^*$, $X_2^*$ are left invariant vector fields
 generated by two Pauli matrices. (2) $M$ is the torus,  $X_1(x,y)={\partial\over \partial x}$ and
 $X_2(x,y)=\sin(2\pi x) {\partial \over \partial y}$. (3) $M=G/Z^3$ where $G$ is the Heisenberg group and 
 $X_1(x,y,z)={\partial\over \partial x}$ and $X_2(x,y,z)={\partial\over \partial y}+ x{\partial\over \partial z}$.
  
\begin{proof}
It is sufficient to prove that $\int_0^1 \sqrt{ \E |\nabla \log q_{1-s} (y_s, z_0)|^2 } ds <\infty$.
We use the following theorem of H. Cao and S. T. Yau \cite{Cao-Yau}. Let $X_0, X_1, \dots, X_m$ be smooth vector fields
on a compact manifold such that $X_0=\sum_{k=1}^m c_k X_k$  for a set of smooth real valued functions $c_k$ on $M$. Likewise suppose that for every set of $i, j, k=1, \dots, m$, $[[X_i,X_j],X_k](x)$
 can be expressed as a linear combination of vector fields from $\{X_{i'}  , [X_{j'},X_{k'}], i',j',k'=1, \dots, m\}$.
 If $u_t$ is a positive solution to the equation ${\partial \over \partial t} u_t=\sum_i L_{X_i}L_{X_i}+L_{X_0}$, there exists a
constant $\delta_0>1$, such that for all $\delta>\delta_0$  and $t>0$,
$${1\over u^2} \sum_i |L_{X_i} u|^2 \le \delta {L_{X_0} u\over u} +\delta {1\over u}{\partial u \over \partial t}+{C_1\over t} +C_2,$$
where $C_1, C_2$ are constants depending on $\L$ and $\delta_0$.
Applying this to the fundamental solution $q_t$, we see that
\begin{equation*}
\E |\nabla \log q_{1-s} (y_s, z_0)|^2
 \le
 \delta \E{L_{X_0} q_{1-s}(\cdot, z_0)\over q_{1-s}(\cdot, z_0)} (y_s)+\delta \E{{\partial q_{1-s}(\cdot, z_0) \over \partial s}(y_s)\over q_{1-s}(y_s, z_0)}
 +{C_1\over 1-s} +C_2.
\end{equation*}
Using the explicit formula for the probability density of $y_t$, we see that for any $s<1$,
\begin{equation*}
{\begin{split}
& \E\left({{\partial \over \partial s} q_{1-s}(\cdot, z_0)(y_s)\over q_{1-s}(y_s, z_0)}\right)
=  \int_M {{\partial \over \partial s} q_{1-s}(x, z_0) q_s(x_0, x) \over q (x_0, z_0)}dx\\
=&\int_M {{\partial \over \partial s} (q_{1-s}(x, z_0)q_s(x_0, x))-q_{1-s}(x, z_0){\partial \over \partial s}q_s(x_0, x) \over q (x_0, z_0)}dx
=-\int_M {q_{1-s}(x, z_0){\partial \over \partial s}q_s(x_0, x) \over q (x_0, z_0)}dx.
\end{split}}
\end{equation*}
Since the divergence of $X_0$ vanishes, the same reasoning leads to the following identities:
\begin{equation*}
{\begin{split}
 &\E\left({L_{X_0} q_{1-s}(\cdot, z_0)\over q_{1-s}(\cdot, z_0)}(y_s)\right)
=  \int_M {L_{X_0} q_{1-s}(x, z_0) q_s(x_0, x) \over q (x_0, z_0)}dx\\
=&\int_M {L_{X_0} (q_{1-s}(x, z_0) q_s(x_0, x))-q_{1-s}(x, z_0) L_{X_0} q_s(x_0, x) \over q (x_0, z_0)}dx
=\int_M {-q_{1-s}(x, z_0)  L_{X_0}q_s(x_0, x) \over q (x_0, z_0)}dx
\end{split}}
\end{equation*}
Let us consider the integral from ${1\over 2}$ to $1$.
\begin{equation*}
{\begin{split}
&\int_{1\over 2}^1 \sqrt{ \E |\nabla \log q_{1-s} (y_s, z_0)|^2 } ds\\
&\le \int_{1\over 2}^1 \left( \int_M \left|{q_{1-s}(x, z_0)  (L_{X_0} q_s(x_0, x)+{\partial \over \partial s}q_s(x_0, x)) \over q (x_0, z_0)}\right|dx+{C_1\over 1-s} +C_2
  \right)^{1\over 2} \;ds
\end{split}}
\end{equation*}
Since $q_t$ is smooth and the manifold is compact, there is a constant $C_3$ such that
$$\sup_{s\in [{1\over 2},1]} \left|L_{X_0} q_s(x_0, x)+{\partial \over \partial s}q_s(x_0, x))\right|\le C_3,$$
\begin{equation*}
{\begin{split}
&\int_{1\over 2}^1 \sqrt{ \E |\nabla \log q_{1-s} (y_s, z_0)|^2 } ds
\le \int_{1\over 2}^1 \sqrt{ {C_3 \over q (x_0, z_0)} +{C_1\over 1-s} +C_2}\;ds<\infty.
\end{split}}
\end{equation*}
The same reasoning shows that $ \int_0^{1\over 2} \sqrt{ \E |\nabla \log q_{1-s} (y_s, z_0)|^2 } ds$ is finite.
\end{proof}

\begin{remark}
(1) If $\L={1\over 2}\Delta$, and $M$ is a complete Riemannian manifold with Ricci curvature is non-negative, 
there is the Harnack inequality: ${|\nabla u|^2\over u^2} - \alpha{u_t\over u}\le \alpha^2 {n\over 2t}$
where $\alpha>1$ and $C$ are constants. See P. Li and S.-T. Yau \cite{Li-Yau}  and B. Davies \cite{Davies-heat-kernel-book}.
Hence the proof of the theorem applies. 
(2) Two step Hormander condition is used in  J. Picard \cite{Picard}, for a different problem.
(3) It is also interesting to explore the Cameron-Martin quasi-invariance theorem 
in this context and prove the flow of the SDE is quasi invariant under a Girsanov-Martin shift. 
This should be straightforward if the shift is induced from vector fields of the form $\int_0^\cdot X^i(x) h_s^i ds$.
The quasi-invariance of the conditioned hypoelliptic measure is now known in some sub-Riemannian case,  see 
F. Baudoin, M. Gordina and M. Tai \cite{Baudoin-Gordina-Tai} for Heisenberg type Lie groups. 
(4) Finally we remark that a limited Li-Yau type inequality 
in  F. Baudoin and N. Garofalo \cite{Baudoin-Garofalo-14}, see also F. Baudoin, M. Bonnefont and N. Garofalo \cite{Baudoin-Bonnefont-Garofalo}, 
was extended to certain sub-Riemmanian situation, we have not yet managed to use it to our advantage, and this will be for a furture
study. A study for semigroups of H\"ormander type second order differential operators, not necessarily satisfying H\"ormander condition,
 can be found  in K. D. Elworthy, Y. LeJan and
 Xue-Mei Li \cite{Elworthy-LeJan-Li-book}.
 Finally we refer to the following articles and book for further analysis on and in sub-Riemannian geometry: 
A. Agrachev and D. Barilari \cite{Agrachev-Barilari},  
 N. Varopoulos, L. Saloff-Coste, and T. Coulhon \cite{Varopoulos-Saloff-Coste-Coulhon}, M. Bramanti \cite{Bramanti},
 A. Bellaiche \cite{Bellaiche},   D. Barilari, U. Boscain and R. Neel \cite{Barilari-Boscain-Neel} and the book by R. Montgomery \cite{Montgomery-subriemannian}. 

 \end{remark}

\def\cprime{$'$} \def\cprime{$'$}

\end{document}